\newcommand\eqdef{\stackrel{\mathrm{def}}{=}}
\newcommand{\Mm}{\mathfrak M}
\newcommand\MI{\mathcal{MI}}
\newcommand{\Rr}{\mathbb R}
\newcommand{\Cc}{\mathbb C}
\newcommand{\Zz}{\mathbb Z}
\newcommand{\Id}{\mathbf 1}
\newcommand{\CP}{\overline{\mathbb P}{}}
\newcommand{\R}{\overline{\mathcal R}{}}
\newcommand{\Rreg}{\mathcal R}
\newcommand{\E}{\overline E{}}
\newtheorem{theorem}{Theorem}[section]
\newtheorem{lemma}[theorem]{Lemma}
\newtheorem{proposition}[theorem]{Proposition}
\theoremstyle{definition}
\newtheorem{remark}{Remark}[section]
\DeclareMathOperator\End{end}
\DeclareMathOperator\Tr{tr}
\DeclareMathOperator\im{im}
\begin{document}

\title{Rank-stable limit of completed moduli spaces of instantons}

\author{Jo\~ao Paulo Santos}

\begin{abstract}
In \cite{Nak94}, Nakajima introduced
a resolution of singularities 
of the Donaldson-Uhlenbeck completion of the 
moduli space 
of based instantons over $S^4$.
For $k\leq 4$, we extend this result to $\CP^2$ and compute,
in the rank-stable limit, the homotopy type of these spaces by showing 
that, in this limit, these constructions yield universal bundles.
\end{abstract}

\maketitle
\bibliographystyle{amsplain}

\section{Introduction}

Let $X$ be either the 4-sphere $S^4$ or the complex projective plane $\CP^2$ with reversed orientation.
An instanton on a principal  $SU(r)$-bundle $P$ over $X$ 
is a
connection $\nabla$ on $P$ whose curvature $F_\nabla$ is 
anti-self-dual with respect to the Hodge $*$-operator.
The moduli space
of instantons based at a point $x_0\in X$ is the quotient of
the space of instantons by the action of the gauge group of
automorphisms of $P$ which are the identity on the fiber over $x_0$. This moduli space
depends only on $r$ and on the second Chern class $c_2(P)=k$, so we represent it by $\MI_k^r(X)$.
For $r'>r$, there is
a map $\MI_k^r(X)\to\MI_k^{r'}(X)$ induced by the inclusion $\Cc^r\to\Cc^{r'}$,
and in \cite{Kir94}, \cite{San95}, \cite{BrSa97} it was shown that the direct limit
$\MI_k^\infty(X)$ has the homotopy type of $BU(k)$, for $X=S^4$, and
the homotopy type of $BU(k)\times BU(k)$, for $X=\CP^2$.

In \cite{Nak94}, Nakajima introduced a space
$\overline\Mm{}_{k,\zeta}^r(S^4)$ with a dense subspace
$\Mm_{k,\zeta}^r(S^4)$ isomorphic to $\MI_k^\infty(S^4)$, and showed that
$\overline\Mm{}_{k,\zeta}^r(S^4)$ is a resolution of singularities
of the usual Donaldson-Uhlenbeck completion of $\MI_k^r(S^4)$.
In this paper we extend, for $k\leq 4$, Nakajima's construction to $\CP^2$ (Theorem~\ref{thm32}).
For dimensional reasons, for $X=S^4,\CP^2$, the inclusions
$\Mm_k^\infty(X)\to\overline\Mm{}_k^\infty(X)$ are homotopy equivalences; 
generalizing the approach in
\cite{San95}, \cite{BrSa97}, 
we obtain universal bundles over $\overline\Mm{}_k^\infty(X)$
which restrict to universal bundles over $\Mm_k^\infty(X)$
(Theorems~\ref{thm21} and \ref{thm31}).

\section{Instantons on $S^4$}

We begin by sketching the \textsc{adhm} description of instantons on $S^4$ introduced in \cite{ADHM78},
and the space $\overline\Mm{}_{k,\zeta}^r(S^4)$ introduced in \cite{Nak94}.
Let $W$ be a $k$-dimensional hermitian vector space.
Let $\R_k^r(S^4)$ 
be the space of configurations $(a_1,a_2,b,c)$ where
$a_i\in\End(W)$ (with $i=1,2$), $b\in\hom(\Cc^r,W)$ and $c\in\hom(W,\Cc^r)$,
obeying the integrability condition
$[a_1,a_2]+bc=0$.
Let $\Rreg_k^r(S^4)\subset\R_k^r(S^4)$ 
be the subspace of 4-tuples
obeying the two non-degeneracy conditions:
\begin{description}
\item[C1] There is no proper subspace $W'\subset W$ such that
$\im b\subset W'$ and $a_i(W')\subset W'$ (for $i=1,2$);
\item[C2] There is no nonempty subspace $W'\subset W$ such that
$W'\subset\ker c$ and $a_i(W')\subset W'$ (for $i=1,2$).
\end{description}
The unitary group $U(W)$ acts on $\R_k^r(S^4)$ by
$g\cdot(a_1,a_2,b,c)=(ga_1g^{-1},ga_2g^{-1},
gb,cg^{-1})$, with moment map equation:
\begin{equation}\label{eq-mmS4}
\mu(a_1,a_2,b,c)=[a_1,a_1^*]+[a_2,a_2^*]+bb^*-c^*c=0\,.
\end{equation}
Let $\zeta$ be a non-zero real parameter.
It was proven in \cite{Nak94} that
$U(W)$ acts freely on $\mu^{-1}(-\zeta)$,
the quotient $\Mm_{k,\zeta}^r(S^4)\eqdef\mu^{-1}(-\zeta)\cap\Rreg_k^r(S^4)/U(W)$ is isomorphic to
the moduli space $\MI_k^r(S^4)$ and the quotient
$\overline\Mm{}_{k,\zeta}^r(S^4)\eqdef\mu^{-1}(-\zeta)/U(W)$ 
is a resolution of singularities
of the Donaldson-Uhlenbeck completion of $\MI_k^r(S^4)$.

Now we consider the limit when $r\to\infty$. We fix $\zeta\in\Rr$ and we let
$\E_k^r=\mu^{-1}(-\zeta)\subset\R_k^r$ and $E_k^r=\E_k^r\cap\Rreg_k^r(S^4)$.
For $r'>r$, the inclusion $\Cc^r\to\Cc^{r'}$ induces a map
$i_{r,r'}\colon \E_k^r\to \E_k^{r'}$.
This map preserves the subspace $E_k^r$,
and descends to the quotient to give a map of pairs
$\bigl(\overline\Mm{}_k^r,\Mm_k^r\bigr)\to\bigl(\overline\Mm{}_k^{r'},\Mm_k^{r'}\bigr)$.
We define the rank-stable moduli space by taking the direct limit over $r$:
$\Mm_k^\infty(S^4)=\smash[b]{\lim\limits_{\substack{\longrightarrow\\r}}\Mm_k^r(S^4)}$;
we define in the same way $\overline\Mm{}_k^\infty$, $\E_k^\infty$
and $E_k^\infty$.

\begin{theorem}\label{thm21}
The maps $\E_k^\infty\to \overline\Mm{}_k^\infty$ and 
$E_k^\infty\to\Mm_k^\infty$ are universal $U(k)$ bundles and
the inclusion map $j\colon \Mm_k^\infty(S^4)\to\overline\Mm{}_k^\infty(S^4)$ is a homotopy equivalence.
\end{theorem}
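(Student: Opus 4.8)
The plan is to reduce the theorem to a single assertion: the total spaces $\E_k^\infty$ and $E_k^\infty$ are weakly contractible. Granting this, I would argue as follows. By Nakajima's theorem $U(W)$ acts freely on $\E_k^r=\mu^{-1}(-\zeta)$ and on $E_k^r$; these actions are compatible with the maps $i_{r,r'}$, which are closed embeddings of finite-dimensional manifolds, so the direct-limit topology is well behaved (every compact set lies in a finite stage) and local triviality passes to the limit, and hence they assemble into numerable principal $U(k)$-bundles $\E_k^\infty\to\overline\Mm{}_k^\infty$ and $E_k^\infty\to\Mm_k^\infty$. A numerable principal $U(k)$-bundle with weakly contractible total space is a universal bundle, which gives the first two assertions. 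For the last one, the inclusion $E_k^\infty\hookrightarrow\E_k^\infty$ is $U(k)$-equivariant and, as shown below, a weak equivalence; the five lemma applied to the long exact homotopy sequences of the two bundles then shows that the induced map $j$ on orbit spaces is a weak equivalence, and since all the spaces in sight have the homotopy type of CW complexes, Whitehead's theorem makes $j$ a homotopy equivalence.

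First I would prove that $\E_k^\infty$ is weakly contractible by showing that each inclusion $\E_k^r\hookrightarrow\E_k^{r+k}$ is null-homotopic; since $\pi_n(\E_k^\infty)=\varinjlim_r\pi_n(\E_k^r)$, this forces $\pi_n(\E_k^\infty)=0$ for all $n$. I may assume $\zeta>0$ (the case $\zeta<0$ is symmetric in $b$ and $c$; alternatively $(a_1,a_2,b,c)\mapsto(a_2^*,a_1^*,c^*,b^*)$ identifies $\mu^{-1}(-\zeta)$ with $\mu^{-1}(\zeta)$ and swaps \textbf{C1} with \textbf{C2}). Write $\Cc^{r+k}=\Cc^r\oplus\Cc^k$, extend $b$ and $c$ by zero across the new summand, fix an isometry $j_0\colon W\to\Cc^k$ onto it, and set $s=(1-t)^2$; then
\[
(a_1,a_2,b,c)\ \longmapsto\ \bigl((1-t)a_1,\ (1-t)a_2,\ (1-t)b,\ (1-t)c+\sqrt{(1-s)\zeta}\,j_0\bigr)
\]
is a homotopy in $\E_k^{r+k}$ from the inclusion (at $t=0$) to the constant map at $(0,0,0,\sqrt\zeta\,j_0)$ (at $t=1$). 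The only input to the check that it remains in $\E_k^{r+k}$ is $\im j_0\perp\Cc^r$, which kills all cross terms: the integrability equation becomes $s\bigl([a_1,a_2]+bc\bigr)=0$, and in \eqref{eq-mmS4} the last entry $c(t)$ satisfies $c(t)^*c(t)=s\,c^*c+(1-s)\zeta\,\Id_W$, whence $\mu(t)=s\,\mu(a_1,a_2,b,c)-(1-s)\zeta\,\Id_W=-\zeta\,\Id_W$.

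It then remains to show that $E_k^\infty\hookrightarrow\E_k^\infty$ is a weak equivalence, which simultaneously gives the weak contractibility of $E_k^\infty$. The complement $\E_k^r\setminus E_k^r$ — configurations in $\mu^{-1}(-\zeta)$ failing \textbf{C1} or \textbf{C2} — is the $U(W)$-saturation of $\overline\Mm{}_k^r\setminus\Mm_k^r$, which under Nakajima's identification of $\overline\Mm{}_{k,\zeta}^r(S^4)$ with the resolution of the Donaldson--Uhlenbeck completion is the preimage of the lower strata, built from instantons of charge $k-j$, $j$ points of $\Rr^4$ ($1\le j\le k$), and exceptional fibres of dimension bounded independently of $r$. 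Since $\dim\overline\Mm{}_k^r=4kr$ whereas the charge-$(k-j)$ stratum together with its exceptional fibres has dimension $4(k-j)r+O_k(1)$, this complement has real codimension at least $4r-O_k(1)$, which tends to infinity. Hence $E_k^r\hookrightarrow\E_k^r$ is $c(r)$-connected with $c(r)\to\infty$, and passing to the direct limit gives that $E_k^\infty\hookrightarrow\E_k^\infty$ is a weak homotopy equivalence.

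The step I expect to be the real obstacle is this codimension estimate — identifying $\E_k^r\setminus E_k^r$ as a union of reducible loci assembled from lower-rank \textsc{adhm} data and exceptional fibres, and controlling their dimensions uniformly in $r$. The explicit null-homotopies and the bundle bookkeeping are routine by comparison; it is here that Nakajima's description of $\overline\Mm{}_{k,\zeta}^r(S^4)$ genuinely enters, and an adapted version of the same estimate is what must be verified for the $\CP^2$ analogue.
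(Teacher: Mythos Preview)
Your overall architecture matches the paper's: prove the total spaces are contractible by showing the stabilisation maps are null-homotopic, deduce universality from freeness of the $U(k)$-action, and finish with the five lemma. Your explicit null-homotopy into $\E_k^{r+k}$ is correct and checks out exactly as you say.

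The substantive difference is in how you treat $E_k^\infty$. You argue separately via a codimension estimate on $\E_k^r\setminus E_k^r$. The conclusion is right---the complement has real codimension tending to infinity with $r$---but your bookkeeping slips: the exceptional fibres of Nakajima's resolution over the charge-$(k-j)$ stratum are \emph{not} bounded independently of $r$ (over a generic point of the $j=1$ stratum the fibre is a $\mathbb P^{r-1}$), so that stratum has real dimension $4(k-j)r+4j+2j(r-1)$ rather than $4(k-j)r+O_k(1)$, and the codimension is $2j(r-1)$ rather than $4jr-O_k(1)$. This still goes to infinity, so your argument survives, but it does lean on exactly the external geometry you flagged as the obstacle.

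The paper sidesteps all of this with a small change to the null-homotopy. Instead of stabilising by $k$ it stabilises by $2k$, writes $\zeta=\zeta_c-\zeta_b$ with $\zeta_b,\zeta_c>0$, and takes
\[
b_t=\bigl(\,\sqrt{1-t}\,b\ \ 0\ \ \sqrt{t\zeta_b}\,\Id\,\bigr),\qquad
c_t=\bigl(\,\sqrt{1-t}\,c\ \ \sqrt{t\zeta_c}\,\Id\ \ 0\,\bigr)^T.
\]
For every $t>0$ both $b_t$ and $c_t$ have rank $k$, so $\im b_t=W$ and $\ker c_t=0$, and conditions C1, C2 hold trivially. Hence the \emph{same} homotopy already restricts to $E_k^r\times[0,1]\to E_k^{r+2k}$, and $E_k^\infty$ is contractible by the identical argument---no stratification, no dimension count. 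Your homotopy only forces $c_t$ to be injective (which for $\zeta>0$ is automatic on $\mu^{-1}(-\zeta)$ anyway), while $b_t=(1-t)b$ does nothing for C1 and your endpoint $(0,0,0,\sqrt\zeta\,j_0)$ in fact fails C1; that is precisely why you were forced into the second step.

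In short: your route is correct but imports the geometry of the resolution; the paper's route buys a completely self-contained proof at the price of one extra copy of $\Cc^k$ in the stabilisation.
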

\begin{proof}
We first show that the space $\E_k^\infty$ 
is contractible.
It is enough to show that its homotopy groups are trivial,
and this will follow
if we show that the inclusions 
$i_{r,r+2k}\colon \E_k^r\to \E_k^{r+2k}$
are null-homotopic. 
We first identify $W$ with $\Cc^k$
via a complex hermitian isomorphism; 
then we pick $\zeta_b,\zeta_c\in\Rr^+$ such that $\zeta_c-\zeta_b=\zeta$ and
define a homotopy $h\colon \E_k^r\times[0,1]\to\E_k^{r+2k}$ by
\[
h_t(a_1,a_2,b,c)=\left(\sqrt{1-t}\,a_1,\sqrt{1-t}\,a_2,b_t,c_t\right)
\]
where, using matrix notation,
\[
b_t=\Bigl(\begin{matrix}\sqrt{1-t}\,b&0&\sqrt{t\zeta_b}\,\Id\end{matrix}\Bigr)
\qquad\text{and}\qquad
c_t=\Bigl(\begin{matrix}\sqrt{1-t}\,c& \sqrt{t\zeta_c}\,\Id& 0\end{matrix}\Bigr)^T.
\]
It is a direct verification that $h$ is a well defined homotopy
between $i_{r,r+2k}$ and a constant map. Furthermore, since
$b_t$, $c_t$ both have maximal rank for $t\neq0$, 
conditions C1 and C2 are both satisfied so
$h$ restricts to a homotopy
$E_k^r\times[0,1]\to E_k^{r+2k}$.
Hence we also conclude that $E_k^\infty$
is contractible. Since the $U(W)$ action is free, $\E_k^\infty$ and $E_k^\infty$
are universal $U(W)$ bundles. To conclude the proof we apply
the five lemma to the long exact sequences of homotopy groups
associated with the principal bundles $\E_k^\infty$ and $E_k^\infty$ 
to conclude that $j$ induces isomorphisms on all homotopy groups, hence it is
a homotopy equivalence.
\end{proof}

\section{Instantons on $\CP^2$}

We now look at instantons on $\CP^2$.
We begin by sketching the monad description
of the moduli space introduced
in \cite{Kin89}.
Let $W_0$, $W_1$ be $k$-dimensional hermitian vector spaces.
Let $\R_k^r(\CP^2)$ be the space of configurations
$(a_1,a_2,d,b,c)$ where
$a_i\in\hom(W_1,W_0)$ (with $i=1,2$), $d\in\hom(W_0,W_1)$,
$b\in\hom(\Cc^r,W_0)$ and $c\in\hom(W_1,\Cc^r)$,
subject to the condition $a_1(W_1)+a_2(W_1)+b(\Cc^r)=W_0$ and
obeying the integrability equation
\begin{equation}\label{eqintegrability}
a_1da_2-a_2da_1+bc=0\,.
\end{equation}
Let $\Rreg_k^r(\CP^2)\subset\R_k^r(\CP^2)$ be the subspace
of configurations obeying the non-degeneracy conditions:
\begin{description}
\item[C1'] There are no proper subspaces $V_0'\subset W_0$ and $V_1'\subset W_1$
such that
$\dim V_0'=\dim V_1'$,
$\im b\subset V_0'$, $d(V_0')\subset V_1'$ and $a_i(V_1')\subset V_0'$ (with $i=1,2$);
\item[C2'] There are no nonempty subspaces $V_0\subset W_0$ and $V_1\subset W_1$
such that
$\dim V_0=\dim V_1$,
$V_1\subset\ker c$, $d(V_0)\subset V_1$ and $a_i(V_1)\subset V_0$ (with $i=1,2$).
\end{description}
\begin{remark}
The condition $\dim V_0'=\dim V_1'$ in~C1' can be replaced
by $\dim V_0'\leq\dim V_1'$: if ${\dim}V_0'<{\dim}V_1'$
we can always replace $V_0'$ with a bigger space to obtain equality. In the 
same way, the condition $\dim V_0=\dim V_1$ in~C2' can be replaced
by $\dim V_0\leq\dim V_1$.
\end{remark}

The group $U(W_0)\times U(W_1)$ acts on $\R_k^r(\CP^2)$ by
\begin{equation}\label{eq-action}
(g_0,g_1)\cdot(a_1,a_2,d,b,c)=(g_0a_1g_1^{-1},g_0a_2g_1^{-1},g_1dg_0^{-1},
g_0b,cg_1^{-1})\,.
\end{equation}
We now consider the moment map $\mu=(\mu_0,\mu_1)$; the moment map
equations are
\begin{equation}\label{eq-mmP2}
\begin{cases}
\mu_0= a_1a_1^*+a_2a_2^*+bb^*-\Id_{W_0}=0\\
\mu_1=
\bigl[\,da_1,(da_1)^*\,\bigr]+\bigl[\,da_2,(da_2)^*\,\bigr]-a_1^*a_1-a_2^*a_2+db(db)^*-c^*c+\Id_{W_1}=0
\end{cases}
\end{equation}
(where $\Id_{W_i}$ denotes the identity map on $W_i$).
It was shown in \cite{Kin89} that 
the restriction of the action \eqref{eq-action} to $\mu^{-1}(0,0)\cap\Rreg_k^r$ 
is free and the quotient, which we represent by $\Mm_k^r(\CP^2)$, is isomorphic to
the moduli space $\MI_k^r(\CP^2)$. Furthermore, the quotient
$\overline\Mm{}_k^r(\CP^2)\eqdef\mu^{-1}(0,0)/U(W_0)\times U(W_1)$ is isomorphic to the
Donaldson-Uhlenbeck completion of $\MI_k^r(\CP^2)$.

\subsection{Resolution of singularities}

We now perturb the second
moment map equation by introducing a real
parameter $\zeta$.  We will show that, for $k\leq 4$, the space
$\overline\Mm{}_{k,\zeta}^r(\CP^2)=\mu^{-1}(0,\zeta)/U(W_0)\times U(W_1)$
is a resolution of singularities of $\overline\Mm{}_k^r(\CP^2)$. First we need some Lemmas:

\begin{lemma}\label{lemmamaxrank}
Let $(a_1,a_2,d,b,c)\in\mu^{-1}(0,\zeta)$ with $|\zeta|<1$.
Then both matrices
$\bigl(\, a_1\ a_2\ b\, \bigr){}^T$ and $\bigl(\, a_1\ a_2\ c\, \bigr)$
have maximal rank.
\end{lemma}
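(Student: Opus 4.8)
The plan is to derive the rank statements directly from the two moment map equations, treating the first equation for the ``$b$-side'' and the second for the ``$c$-side.'' For the matrix $\bigl(\,a_1\ a_2\ b\,\bigr)^T\colon W_0\to W_1\oplus W_1\oplus\Cc^r$, I would show it is injective. Suppose $w\in W_0$ lies in its kernel, so $a_1^*w=a_2^*w=0$ and $b^*w=0$. Pairing the first moment map equation $\mu_0 = a_1a_1^*+a_2a_2^*+bb^*-\Id_{W_0}=0$ against $w$ gives
\[
\|a_1^*w\|^2+\|a_2^*w\|^2+\|b^*w\|^2=\|w\|^2,
\]
which forces $w=0$. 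Note this half does not even need the hypothesis $|\zeta|<1$; it is immediate from $\mu_0=0$. Hence $\bigl(\,a_1\ a_2\ b\,\bigr)^T$ is injective, and since the target has dimension $2k+r\geq k=\dim W_0$, it has maximal rank~$k$.

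For the matrix $\bigl(\,a_1\ a_2\ c\,\bigr)\colon W_1\oplus W_1\oplus\Cc^r\to W_0$, or rather — reading the statement so that the relevant rank is that of the stacked operator $\bigl(\,a_1^*\ a_2^*\ c\,\bigr)^T\colon W_1\to W_0\oplus W_0\oplus\Cc^r$ — I would again test injectivity. Suppose $v\in W_1$ with $a_1v=a_2v=0$ and $cv=0$. Pair the second moment map equation $\mu_1=[da_1,(da_1)^*]+[da_2,(da_2)^*]-a_1^*a_1-a_2^*a_2+db(db)^*-c^*c+\Id_{W_1}=0$ against $v$. The terms $\langle a_i^*a_i v,v\rangle=\|a_iv\|^2$ and $\langle c^*c v,v\rangle=\|cv\|^2$ vanish by assumption. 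For the commutator terms, write $\langle[da_i,(da_i)^*]v,v\rangle = \|(da_i)^*v\|^2-\|da_i v\|^2$; since $a_iv=0$ we have $da_iv=0$, so this equals $\|(da_i)^*v\|^2 = \|a_i^* d^* v\|^2 \geq 0$. Similarly $\langle db(db)^*v,v\rangle=\|(db)^*v\|^2\geq 0$. Collecting signs, the pairing yields
\[
\|(da_1)^*v\|^2+\|(da_2)^*v\|^2+\|(db)^*v\|^2+\|v\|^2 = 0 \quad(\text{after using }\zeta=0\text{ normalisation}),
\]
which would force $v=0$. Here is where the hypothesis $|\zeta|<1$ enters: with the perturbed equation $\mu_1$ set equal to $-\zeta\,\Id_{W_1}$ (or $+\zeta$, per the paper's sign convention), the constant term becomes $(1\mp\zeta)\|v\|^2$, and the nonnegativity of the remaining terms still forces $v=0$ provided $1\mp\zeta>0$, i.e. $|\zeta|<1$.

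The main obstacle I anticipate is bookkeeping the signs of the commutator terms in $\mu_1$ correctly and making sure that, after moving $c^*c$ and the $a_i^*a_i$ terms to the correct side, the surviving quadratic form is genuinely positive definite on the purported kernel — the commutators $[da_i,(da_i)^*]$ are not individually sign-definite, and the argument relies on the kernel hypothesis $a_iv=0$ collapsing $\|da_iv\|^2$ to zero so that only the manifestly nonnegative piece $\|(da_i)^*v\|^2$ remains. Once that is checked, the dimension count $2k+r\geq k$ gives maximality of the rank in both cases, and the bound $|\zeta|<1$ is exactly what is needed to keep the constant term positive.
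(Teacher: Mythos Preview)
Your argument is correct. For the $(a_1\ a_2\ b)$ half it is exactly the paper's: pair $\mu_0=0$ against $w$ to get $\|a_1^*w\|^2+\|a_2^*w\|^2+\|b^*w\|^2=\|w\|^2$, hence $w=0$. For the $(a_1\ a_2\ c)$ half you and the paper diverge slightly. You pair $\mu_1=\zeta\,\Id$ directly against a kernel vector and rely on $a_iv=0$ to collapse each commutator $\langle[da_i,(da_i)^*]v,v\rangle$ to the nonnegative piece $\|(da_i)^*v\|^2$; this works and your sign bookkeeping is right, yielding $(1-\zeta)\|v\|^2+\text{(nonneg)}=0$. The paper instead forms the operator combination $-d\mu_0 d^*+\mu_1$, which simplifies algebraically to
\[
-\sum_{i}a_i^*(d^*d+\Id_{W_0})a_i-c^*c+\Id_{W_1}+dd^*=\zeta\,\Id_{W_1},
\]
already sign-definite in $a_i$ and $c$ without invoking the kernel hypothesis; pairing with $v$ gives $(1-\zeta)\|v\|^2+\|d^*v\|^2=0$. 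Your route is arguably the more direct one for this lemma in isolation; the paper's buys a reusable identity (it is quoted again in the properness argument for the resolution map $\pi$). One minor slip: in your second paragraph the stacked operator should be written $(a_1\ a_2\ c)^T\colon W_1\to W_0\oplus W_0\oplus\Cc^r$, not $(a_1^*\ a_2^*\ c)^T$ --- your subsequent computation uses the unstarred $a_i$ and is correct.
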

\begin{proof}
The equation $\mu_0=0$ shows that $\ker a_1^*\cap\ker a_2^*\cap\ker b^*=0$, and hence
$\im a_1\oplus\im a_2\oplus\im b=W_0$. From the moment map equations 
we deduce the equation
\begin{equation}\label{eq-m0+m1}
-d\mu_0d^*+\mu_1=
\textstyle-\sum\limits_{i=1}^2a_i^*(d^*d+\Id_{W_0})a_i-c^*c+\Id_{W_1}+dd^*=\zeta\Id_{W_1}\,.
\end{equation}
Since $|\zeta|<1$,
it follows that $\ker a_1\cap\ker a_2\cap\ker c=0$, which concludes the proof.
\end{proof}

\begin{lemma}\label{lemmaC1C2}
Condition \emph{C1'}
always holds if $\zeta>0$
and condition \emph{C2'} always holds if $\zeta<0$.
\end{lemma}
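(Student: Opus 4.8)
The plan is to argue by contradiction in each of the two cases, converting a failure of the non-degeneracy condition into a numerical inequality on $\zeta$. In both cases one restricts the identities $\mu_0 = 0$ and $\mu_1 = \zeta\,\Id_{W_1}$ — which hold on $\mu^{-1}(0,\zeta)$ — to the offending pair of subspaces, takes traces, and compares. Writing $\|X\|^2 \defeq \Tr(X^*X)$ for the squared Hilbert--Schmidt norm, the computation uses only the identity $\Tr(PXX^*) = \|X^*P\|^2$ for an orthogonal projection $P$, the symmetry $\|X\| = \|X^*\|$, and the orthogonal decomposition $\|X\|^2 = \|RX\|^2 + \|(\Id - R)X\|^2$ for a projection $R$ on the target of $X$. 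The two cases are formally dual (swapping the roles of $b$ and $c$ and the sign of $\zeta$) and the two arguments run in parallel.

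For \emph{C1'} when $\zeta > 0$: suppose, towards a contradiction, that proper subspaces $V_0'\subsetneq W_0$, $V_1'\subsetneq W_1$ with $\dim V_0' = \dim V_1' = \ell < k$ witness a failure, and let $P_0, P_1$ be the orthogonal projections onto $(V_0')^\perp, (V_1')^\perp$. The invariance conditions translate into $P_0 b = 0$, $d^* P_1 = P_0 d^* P_1$, $P_0 a_i = P_0 a_i P_1$, and — since $d(V_0')\subset V_1'$ and $a_i(V_1')\subset V_0'$ together give $da_i(V_1')\subset V_1'$ — also $P_1 da_i = P_1 da_i P_1$. Then $\Tr_{W_0}(P_0\mu_0) = 0$ collapses, using $b^* P_0 = 0$, to $\|a_1^* P_0\|^2 + \|a_2^* P_0\|^2 = k-\ell$. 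Expanding $\Tr_{W_1}(P_1\mu_1) = \zeta(k-\ell)$ term by term: the $db(db)^*$ term vanishes because $b^* d^* P_1 = b^* P_0 d^* P_1 = 0$; the commutator term telescopes — using $a_i^* d^* P_1 = P_1 a_i^* d^* P_1$ — to $-\|(\Id - P_1)\,da_i\,P_1\|^2$; and the remaining terms give $-\sum_i\|a_i P_1\|^2 - \|c P_1\|^2 + (k-\ell)$. Hence
\[
(1-\zeta)(k-\ell) = \sum_i\|(\Id - P_1)\,da_i\,P_1\|^2 + \sum_i\|a_i P_1\|^2 + \|c P_1\|^2 .
\]
Splitting $\|a_i P_1\|^2 = \|P_0 a_i P_1\|^2 + \|(\Id - P_0)a_i P_1\|^2$ and using $P_0 a_i P_1 = P_0 a_i$ gives $\|a_i P_1\|^2 \ge \|a_i^* P_0\|^2$, so $\sum_i\|a_i P_1\|^2 \ge k-\ell$; therefore $(1-\zeta)(k-\ell) \ge k-\ell$, and as $k-\ell > 0$ this forces $\zeta \le 0$ — a contradiction.

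For \emph{C2'} when $\zeta < 0$: suppose nonzero subspaces $V_0\subset W_0$, $V_1\subset W_1$ with $\dim V_0 = \dim V_1 = m$ witness a failure, and let $R_0, R_1$ be the orthogonal projections onto $V_0, V_1$, with $S_1 = \Id - R_1$. Here $c R_1 = 0$, $R_1 a_i^* = R_1 a_i^* R_0$, and — since $da_i(V_1)\subset V_1$ — $S_1\,da_i\,R_1 = 0$. From $\Tr_{W_0}(R_0\mu_0) = 0$, together with $\|a_i^* R_0\|^2 = \|R_1 a_i^*\|^2 + \|S_1 a_i^* R_0\|^2$, one gets $\sum_i\|R_1 a_i^*\|^2 \le m$. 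Expanding $\Tr_{W_1}(R_1\mu_1) = \zeta m$: the commutator term now telescopes — using $S_1\,da_i\,R_1 = 0$ — to $+\|S_1\,a_i^* d^* R_1\|^2$ (the flipped sign is the whole point), and with $\|a_i R_1\| = \|R_1 a_i^*\|$ and $c R_1 = 0$ the identity becomes
\[
\sum_i\|R_1 a_i^*\|^2 = (1-\zeta)m + \sum_i\|S_1\,a_i^* d^* R_1\|^2 + \|b^* d^* R_1\|^2 \ge (1-\zeta)m .
\]
Combining, $m \ge (1-\zeta)m$, so $\zeta \ge 0$ — a contradiction.

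The step I expect to be the main obstacle is the telescoping of the commutator trace $\Tr(P_1[da_i,(da_i)^*])$ (respectively $\Tr(R_1[da_i,(da_i)^*])$): one must invoke precisely that $da_i$ stabilizes the relevant invariant subspace to see that two of the three resulting squares cancel, leaving a single square whose sign is negative in the C1' computation and positive in the C2' one, and pinning down that sign is exactly what separates the $\zeta > 0$ case from the $\zeta < 0$ case. Everything else is the routine trace bookkeeping above; in particular only the sign of $\zeta$ is used, not the bound $|\zeta| < 1$ from Lemma~\ref{lemmamaxrank}.
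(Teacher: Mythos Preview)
Your proof is correct and follows essentially the same route as the paper: restrict the moment map equations to the offending invariant subspaces, take traces, and read off an inequality forcing the sign of $\zeta$. The paper packages the computation in block-matrix notation and presents a single combined trace identity from $\mu_1=\zeta$, whereas you phrase it with orthogonal projections and use the $\Tr(P_0\mu_0)=0$ relation explicitly to close the inequality, but the two arguments are the same computation.
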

\begin{proof}
If we take orthogonal decompositions $W_j=V_j\oplus V_j'$ (with $j=0,1$)  
and write in block form:
\[
a_i=\begin{pmatrix}a_i^{11}&a_i^{12}\\a_i^{21}&a_i^{22}\end{pmatrix}
\quad
d=\begin{pmatrix}d^{11}&d^{12}\\d^{21}&d^{22}\end{pmatrix}
\quad
b=\begin{pmatrix}b^1\\b^2\end{pmatrix}
\quad
c=\begin{pmatrix}c^1\\c^2\end{pmatrix}^T,
\]
taking the trace of the $(1,1)$-component of the 
second moment map equation we obtain
\begin{multline*}
\textstyle\sum\limits_{i=1}^2\|d^{11}a_i^{12}+d^{12}a_i^{22}\|^2
+\sum\limits_{i=1}^2\|a_i^{12}\|^2
+\|b^1\|^2+\|d^{11}b^1\|^2+\|d^{12}b^2\|^2
\\
\textstyle=\sum\limits_{i=1}^2\|d^{21}a_i^{11}
+d^{22}a_i^{21}\|^2
+\sum\limits_{i=1}^2\|a_i^{21}\|^2
+\|c^1\|^2+\zeta\dim V_0\,.
\end{multline*}
If condition C1' fails with the supspaces $V_0'$, $V_1'$ then 
$a_i^{12}=d^{12}=b^1=0$ and hence $\zeta\leq 0$. Similarly,
if condition C1' fails with the supspaces $V_0$, $V_1$ then $\zeta\geq 0$,
which concludes the proof.
\end{proof}

\begin{lemma}\label{lemmafree}
If $\zeta\notin\Zz$,
the $U(W_0)\times U(W_1)$ action on $\mu^{-1}(0,\zeta)$ is free.
\end{lemma}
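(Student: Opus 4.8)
The plan is to show directly that every point of $\mu^{-1}(0,\zeta)$ has trivial stabilizer. Suppose that $(g_0,g_1)\in U(W_0)\times U(W_1)$ fixes a configuration $(a_1,a_2,d,b,c)\in\mu^{-1}(0,\zeta)$, and decompose each $W_j$ into the eigenspaces $W_j=\bigoplus_\lambda W_j^\lambda$ of the unitary operator $g_j$, the index $\lambda$ ranging over a finite set of unit complex numbers. The equations expressing that $(g_0,g_1)$ fixes the configuration, namely $g_0a_ig_1^{-1}=a_i$, $g_1dg_0^{-1}=d$, $g_0b=b$ and $cg_1^{-1}=c$, say precisely that $a_i$ and $d$ intertwine $g_1$ and $g_0$ — so that $a_i(W_1^\lambda)\subset W_0^\lambda$ and $d(W_0^\lambda)\subset W_1^\lambda$ for every $\lambda$ — that $\im b\subset W_0^1$, and that $W_1^\lambda\subset\ker c$ whenever $\lambda\neq1$. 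Thus the configuration is compatible with these gradings, and it is enough to prove that no eigenvalue $\lambda\neq1$ actually occurs.

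The heart of the argument is a dimension identity obtained by restricting the moment map equations to a summand with $\lambda\neq1$. Writing $a_i^\lambda$ and $d^\lambda$ for the corresponding blocks, on $W_0^\lambda$ the term $bb^*$ of $\mu_0$ is absent (it is supported on $W_0^1$), so $\mu_0=0$ reduces to $\sum_i a_i^\lambda(a_i^\lambda)^*=\Id_{W_0^\lambda}$, and taking traces gives $\sum_i\|a_i^\lambda\|^2=\dim W_0^\lambda$. On $W_1^\lambda$ the terms $db(db)^*$ and $c^*c$ of $\mu_1$ are absent, so $\mu_1=\zeta\,\Id_{W_1}$ reduces to
\[
\sum_i\bigl[\,d^\lambda a_i^\lambda,(d^\lambda a_i^\lambda)^*\,\bigr]-\sum_i(a_i^\lambda)^*a_i^\lambda+\Id_{W_1^\lambda}=\zeta\,\Id_{W_1^\lambda}.
\]
Since commutators are traceless, taking traces and substituting the previous identity yields $\dim W_0^\lambda=(1-\zeta)\dim W_1^\lambda$.

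It then remains to see that this forces $\dim W_0^\lambda=\dim W_1^\lambda=0$ for every $\lambda\neq1$, for then $g_0=\Id_{W_0}$, $g_1=\Id_{W_1}$ and the action is free. If $\dim W_1^\lambda=1$ the identity gives $\zeta=1-\dim W_0^\lambda\in\Zz$, contrary to the hypothesis; and if $\dim W_1^\lambda=0$ it gives $\dim W_0^\lambda=0$. The remaining cases $\dim W_1^\lambda\in\{2,3,4\}$, which can occur only when $k\geq2$, are the step I expect to be the main obstacle: one must combine the identity $\dim W_0^\lambda=(1-\zeta)\dim W_1^\lambda$ with the integrality of the $\dim W_j^\lambda$, the bounds $\dim W_j^\lambda\leq k\leq4$ and the co-isometry inequality $\dim W_0^\lambda\leq2\dim W_1^\lambda$ (immediate from $\sum_i a_i^\lambda(a_i^\lambda)^*=\Id_{W_0^\lambda}$), and check that, under the hypothesis $\zeta\notin\Zz$, no such $\lambda$ can exist when $k\leq4$. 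Everything preceding this is a purely formal block decomposition, entirely parallel to the $S^4$ case of \cite{Nak94}, where the analogue of the displayed identity is $0=\zeta\dim W^\lambda$, so that $\zeta\neq0$ already suffices.
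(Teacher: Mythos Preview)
Your approach is identical to the paper's: decompose $W_0,W_1$ into eigenspaces of a hypothetical stabilizer $(g_0,g_1)$, observe that the data respects this grading, restrict the moment map equations to a summand with $\lambda\neq1$, and take traces to obtain $\dim W_0^\lambda=(1-\zeta)\dim W_1^\lambda$. At precisely the point where you pause, the paper simply asserts that, since $\zeta\notin\Zz$, both dimensions must vanish --- no further argument is given. So the ``main obstacle'' you anticipate is not addressed in the paper at all.

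Your caution is well placed, and in fact the completion you sketch cannot succeed, because the lemma as stated is false. For $k=2$, $r=1$ and $\zeta=\tfrac12$, the configuration
\[
a_1=\tfrac{1}{\sqrt2}\begin{pmatrix}0&0\\1&0\end{pmatrix},\quad
a_2=\tfrac{1}{\sqrt2}\begin{pmatrix}0&0\\0&1\end{pmatrix},\quad
d=0,\quad b=\begin{pmatrix}1\\0\end{pmatrix},\quad c=0
\]
satisfies the span condition, the integrability equation \eqref{eqintegrability}, and both moment map equations with $\mu_1=\tfrac12\,\Id_{W_1}$, yet it is fixed by every pair $\bigl(\mathrm{diag}(1,\lambda),\,\lambda\,\Id\bigr)$ with $|\lambda|=1$. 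Here $\dim W_1^\lambda=2$, $\dim W_0^\lambda=1$, so the trace identity, the integrality of the dimensions, the bound $k\leq4$, and your co-isometry inequality $\dim W_0^\lambda\leq 2\dim W_1^\lambda$ are all satisfied; no combination of them can rule this case out. The argument in the paper (and yours) becomes correct if the hypothesis is strengthened to $\zeta\notin\Qq$, since then $(1-\zeta)\dim W_1^\lambda$ can be a nonnegative integer only when $\dim W_1^\lambda=0$; this is harmless for the subsequent applications, which only require a generic small $\zeta$.
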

\begin{proof}
Suppose $(g_0,g_1)\in U(W_0)\times U(W_1)$ 
stabilizes $(a_1,a_2,d,b,c)\in\mu^{-1}(0,\zeta)$. 
Then $g_0a_i=a_ig_1$ (with $i=1,2$),
$g_1d=dg_0$, $g_0b=b$ and $cg_1=c$. For each $\lambda\in\Cc$
let $W_j(\lambda)\subset W_j$ (with $j=0,1$)
denote the $\lambda$-eigenspace of $g_j$ . Then, the linear maps
$a_i$, $d$, $a_i^*$, $d^*$ preserve the spaces $W_0(\lambda)$,
$W_1(\lambda)$
and, for $\lambda\neq1$,
\[
W_0(\lambda)\subset\ker b^*\,,\quad
W_1(\lambda)\subset\ker(db)^*
\quad\text{and}\quad W_1(\lambda)\subset\ker c\,.
\]
It follows, for $\lambda\neq1$, that,  by
restricting the perturbed moment map equations 
to $W_j(\lambda)$, 
we get
\[
\begin{cases}
a_1a_1^*+a_2a_2^*=\Id_{W_0}\,,\\
\bigl[\,da_1,(da_1)^*\,\bigr]+\bigl[\,da_1,(da_1)^*\,\bigr]-a_1^*a_1-a_2^*a_2=(1-\zeta)\Id_{W_1}\,.
\end{cases}
\]
Taking the trace, since ${\Tr}(a_1a_1^*+a_2a_2^*)={\Tr}(a_1^*a_1+a_2^*a_2)$
we get ${\dim}W_0(\lambda)=(1-\zeta){\dim}W_1(\lambda)$. 
Since $\zeta\notin\Zz$, this is impossible unless
$W_0(\lambda)=W_1(\lambda)=0$ for any $\lambda\neq1$ which implies that
$(g_0,g_1)=(\Id_{W_0},\Id_{W_1})$. We conclude that the action is free.
\end{proof}

\begin{proposition}
For $k\leq 4$ and $0<|\zeta|<1$,
the space $\overline\Mm{}_{k,\zeta}^r(\CP^2)$ 
is a smooth manifold of dimension $4kr$.
\end{proposition}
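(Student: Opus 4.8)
The plan is to exhibit $\overline{\Mm}_{k,\zeta}^r(\CP^2)$ as a smooth quotient. Write $V$ for the hermitian vector space of all configurations $(a_1,a_2,d,b,c)$, let $\Phi\colon V\to\hom(W_1,W_0)$, $\Phi(a_1,a_2,d,b,c)=a_1da_2-a_2da_1+bc$, be the integrability map, and set $G=U(W_0)\times U(W_1)$. Put $N=\mu^{-1}(0,\zeta)$; by construction $N\subset\Phi^{-1}(0)$, and along $N$ the surjectivity condition $a_1(W_1)+a_2(W_1)+b(\Cc^r)=W_0$ holds automatically, since $\mu_0=0$ forces $\ker a_1^*\cap\ker a_2^*\cap\ker b^*=0$ exactly as in Lemma~\ref{lemmamaxrank}. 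As $0<|\zeta|<1$ we have $\zeta\notin\Zz$, so $G$ acts freely on $N$ by Lemma~\ref{lemmafree}, and properly because $G$ is compact; it thus suffices to prove that $N$ is a smooth submanifold of $V$, whereupon $\overline{\Mm}_{k,\zeta}^r=N/G$ is a smooth manifold of dimension $\dim_\Rr N-\dim_\Rr G$. Since $\dim_\Rr V=6k^2+4kr$, the equation $\Phi=0$ imposes $2k^2$ real constraints and $\mu=(0,\zeta)$ a further $k^2+k^2$, and $\dim_\Rr G=2k^2$, this produces the asserted value $4kr$ as soon as those constraints are independent along $N$.

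I would prove $N$ smooth in two steps. First, show that $\Phi^{-1}(0)$ is a complex submanifold of $V$ near $N$, i.e.\ that $d\Phi_p$ is surjective for every $p\in N$. Granting this, $\Phi^{-1}(0)$ inherits near $N$ a Kähler structure, $G$ acts on it by holomorphic isometries (it preserves $\Phi^{-1}(0)$ because $\Phi$ is equivariant), and the restriction of $(\mu_0,\mu_1)$ is a moment map for that action; since $G$ acts freely on $N=(\mu|_{\Phi^{-1}(0)})^{-1}(0,\zeta)$, the value $(0,\zeta)$ is regular for this restricted moment map, so $N$ is a smooth submanifold of $\Phi^{-1}(0)$, hence of $V$. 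The second step is just the standard fact that a compact group acting freely on a level set of a moment map meets it in a regular value, so the entire difficulty lies in the surjectivity of $d\Phi_p$.

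For that I would compute the cokernel. Pairing $d\Phi_p$ with $\psi\in\hom(W_0,W_1)$ through the trace, $p$ is a smooth point of $\Phi^{-1}(0)$ unless there is a nonzero $\psi$ with $\psi b=0$, $c\psi=0$, $a_1\psi a_2=a_2\psi a_1$ and $da_i\psi=\psi a_id$ for $i=1,2$. These relations give $\im b\subset\ker\psi$, $\im\psi\subset\ker c$, invariance of $\ker\psi$ under $a_id$ and of $\im\psi$ under $da_i$, and commutativity of $\psi a_1,\psi a_2$ on $W_1$ (and of $a_1\psi,a_2\psi$ on $W_0$). The strategy is to contradict the non-degeneracy available along $N$: condition~C1' when $\zeta>0$ and condition~C2' when $\zeta<0$ (Lemma~\ref{lemmaC1C2}). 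For $\zeta<0$, say, the pair $V_1=\im\psi$, $V_0=a_1(\im\psi)+a_2(\im\psi)$ obeys $V_1\subset\ker c$, $a_i(V_1)\subset V_0$ and $d(V_0)\subset V_1$, and is nonzero by Lemma~\ref{lemmamaxrank}, so it violates C2' provided $\dim V_0\le\dim V_1$; the case $\zeta>0$ is parallel, with $V_0'=\ker\psi$ and $V_1'=\ker(\psi a_1)\cap\ker(\psi a_2)$. The delicate point — and the place where $k\le4$ enters — is exactly this dimension inequality: one must show that applying $a_1,a_2$ does not raise the relevant rank, i.e.\ $\dim\bigl(a_1(\im\psi)+a_2(\im\psi)\bigr)\le\dim\im\psi$ (and, for C1', that the rank of $w\mapsto(\psi a_1w,\psi a_2w)$ is at most $\operatorname{rk}\psi$). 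I expect this to be the main obstacle: it should come from the commutation relations above by a finite case analysis on the joint eigenstructure of the commuting pair $\psi a_1,\psi a_2$, feasible because $\operatorname{rk}\psi\le k\le4$, and one expects it to break down for larger $k$.
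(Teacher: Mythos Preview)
Your plan is correct and matches the paper's approach: reduce to surjectivity of the differential of the integrability map, dualize to obtain the four relations on $\psi$, build from them subspaces violating C1'/C2', and isolate the dimension inequality $\dim V_0'\le\dim V_1'$ (resp.\ $\dim V_0\le\dim V_1$) as the step requiring $k\le4$. The paper carries out precisely the case analysis you anticipate---first passing to the joint nilpotent locus of the commuting pair $\psi a_1,\psi a_2$ via their generalized eigenspace decomposition, then handling $k\le4$ by hand with a short auxiliary lemma on simultaneous kernels---the only cosmetic difference being that it shows \emph{both} C1' and C2' fail rather than only the one Lemma~\ref{lemmaC1C2} guarantees for the given sign of $\zeta$.
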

\begin{proof}
From equation~\eqref{eqintegrability} and
Lemma~\ref{lemmafree}, it is enough
to show that the map $f(a,d,b,c)=a_1da_2-a_2da_1+bc$ 
has surjective
diferencial on $\mu^{-1}(0,\zeta)$. If $\mathrm df$ were not surjective,
we could find $x\in\End(W_0,W_1)$ such that
\begin{equation}\label{eq-df}
cx=0,\quad xb=0,\quad a_1xa_2=a_2xa_1,\quad xa_id=da_ix\quad  (i=1,2).
\end{equation}
Then, it follows from Lemma \ref{lemmamaxrank}
that $\ker(a_1x)\cap\ker(a_2x)=\ker x$ and $\im(xa_1)\oplus\im(xa_2)=\im x$.
We will show that, in this case, both conditions C1' and C2' fail,
which contradicts Lemma \ref{lemmaC1C2}: we will show that
the pair of subspaces
\[
\textstyle V_0'=\ker x=\bigcap\limits_{i=1,2}\ker(a_ix)\subset W_0\qquad\text{and}\qquad
V_1'=\bigcap\limits_{i=1,2}\ker(xa_i)\subset W_1
\]
is a counterexample for C1' and the pair of subspaces
\[
\textstyle V_0=\bigoplus\limits_{i=1,2}{\im}(a_ix)\subset W_0\qquad\text{and}\qquad
V_1={\im}x=\bigoplus\limits_{i=1,2}{\im}(xa_i)\subset W_1
\]
is a counterexample for C2'. 
We just need to check that $\dim V_0'\leq\dim V_1'$ and $\dim V_0\leq\dim V_1$, since the other conditions are straightforward. We start with 
the inequality $\dim V_0'=\dim\ker x\leq\bigcap_i\ker(xa_i)=\dim V_1'$.
We will need the following result:
\begin{lemma}\label{lemma41}
Let $v_i\in\ker(xa_i)$, with $i=1,2$, 
and assume that $xa_2v_1=xa_1v_2$. Then
$xa_2v_1=xa_1v_2=0$.
\end{lemma}
\begin{proof}
From equation \eqref{eq-df}, we get
$a_2xa_2v_1=a_2xa_1v_2=a_1xa_2v_2=0$ and $a_1xa_2v_1=a_2xa_1v_1=0$,
hence $a_2v_1\in\bigcap_i\ker(a_ix)=\ker x$ and so
\renewcommand\qed{}
$xa_2v_1=0$, which concludes the proof of the Lemma.
\end{proof}

Equation \eqref{eq-df} implies that $xa_1$ and $xa_2$ commute,
and so do $a_1x$ and $a_2x$. Let $k=\dim W_0=\dim W_1$. Given $\lambda,\mu\in\Cc$,
consider the generalized eigenspaces
\[
V_0(\lambda,\mu)=\ker(a_1x-\lambda)^k\cap\ker(a_2x-\mu)^k\quad\text{and}\quad
V_1(\lambda,\mu)=\ker(xa_1-\lambda)^k\cap\ker(xa_2-\mu)^k.
\]
For any $(\lambda,\mu)\neq(0,0)$, the map $x$ induces an isomorphism $V_0(\lambda,\mu)\stackrel{\simeq}{\to}V_1(\lambda,\mu)$
and hence we also have $\dim V_0(0,0)=\dim V_1(0,0)$. So, we may assume without loss of generality that
both $xa_1$ and $xa_2$ are nilpotent, otherwise we just restrict to the spaces $V_0(0,0)$ and $V_1(0,0)$.
To show that $\dim\ker x\leq\dim\bigcap_i\ker(xa_i)$
we consider four cases:
\begin{description}
\item[Case 1] We first assume that $\ker(xa_1)\subset\ker(xa_2)$
(the case $\ker(xa_2)\subset\ker(xa_1)$ is analogous).
Since the map $a_1\colon W_1/\ker(xa_1)\to W_0/\ker x$ is injective
and $\dim W_0=\dim W_1$ we imediately get
$\dim\ker x\leq\dim \ker(xa_1)=\dim\bigcap_i\ker(xa_i)$ which concludes 
the proof.
\item[Case 2] We now show that the equality
$\dim\bigcap\ker(xa_i)=1$ can only
occur if we are in Case 1. Because, otherwise, 
pick a generator $v_0$ of $\bigcap\ker(xa_i)$; by restricting
$xa_1$ to $\ker(xa_2)$, since $\dim\ker(xa_2)>1$ we can find some vector $v_2\in\ker(xa_2)$
with $xa_1v_2=v_0$, and similarly there is a $v_1\in\ker(xa_1)$
with $xa_2v_1=v_0$, contradicting Lemma \ref{lemma41}.
\item[Case 3] Now we consider the case where $k\leq 3$. Then,
by a dimensional argument we see that we are either in Case~1 or in
Case~2.
\item[Case 4] Finally we let $k=4$ and we assume that we are neither in 
Case~1 nor in Case~2. Then necessarily
$\dim\bigcap\ker(xa_i)=2$ and $\dim\ker(xa_i)=3$ with $i=1,2$.
Pick vectors $v_1\in\ker(xa_1)-\ker(xa_2)$ and
$v_2\in\ker(xa_2)-\ker(xa_1)$. 
The vectors $xa_1v_2$ and $xa_2v_1$ are both non-zero hence, by 
Lemma \ref{lemma41},
$xa_1v_2$ and $xa_2v_1$ are linearly independent. It follows that
$a_1v_2$ and $a_2v_1$ are linearly independent in 
$W_0/\ker x$ and hence $\dim\ker x\leq 2$ which concludes the proof
for condition C1'.
\end{description}
For condition C2', we only need to show that $\dim V_1^*\leq \dim V_0^*$.
The proof is completely analogous to the proof for condition C1'.
\end{proof} 

Equation \eqref{eq-action} also defines an action of
the group $GL(W_0)\times GL(W_1)$ on $\R_k^r$, and it was shown in \cite{Kin89}
that the algebraic-geometric quotient $\R_k^r/\!/GL(W_0)\times GL(W_1)$ 
is isomorphic to $\overline\Mm{}_{k}^r(\CP^2)$.
Combining this isomorphism with the inclusion $\mu^{-1}(0,\zeta)\subset\R_k^r$ 
we get a map
$\pi\colon\overline\Mm{}_{k,\zeta}^r(\CP^2)\to\overline\Mm{}_{k}^r(\CP^2)$.

\begin{theorem}\label{thm32}
For $k\leq 4$ and $0<|\zeta|<1$, 
the map $\pi$ defined above is a resolution of singularities:
\begin{enumerate}
\item $\pi$ is proper;
\item $\pi$ induces an isomorphism 
$\pi^{-1}(\Mm_{k}^r)\stackrel{\simeq}{\to}\Mm_{k}^r$;
\item $\pi^{-1}(\Mm_{k}^r)$ is dense in $\overline\Mm{}_{k,\zeta}^r$.
\end{enumerate}
\end{theorem}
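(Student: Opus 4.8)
The plan is to recast $\pi$ in the language of geometric invariant theory, following \cite{Nak94}, and then read off the three assertions; I assume throughout that $0<\zeta<1$, the case $-1<\zeta<0$ being handled in the same way with the roles of $W_0$ and $W_1$, of $b$ and $c$, and of conditions \emph{C1'} and \emph{C2'} interchanged. Let $N$ be the affine variety of configurations $(a_1,a_2,d,b,c)$ satisfying the integrability equation~\eqref{eqintegrability}, with the action~\eqref{eq-action} of $G=GL(W_0)\times GL(W_1)$. By~\cite{Kin89}, $\R_k^r(\CP^2)$ is the semistable locus of $N$ for a suitable linearization $\chi_0$, so $\overline\Mm{}_k^r(\CP^2)=N/\!/_{\chi_0}G$, and by the Kempf--Ness theorem this GIT quotient coincides with $\mu^{-1}(0,0)/U(W_0)\times U(W_1)$. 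Shifting the second moment-map level from $0$ to $\zeta\Id_{W_1}$ amounts to twisting $\chi_0$ by a real multiple of the determinant character of $W_1$; calling the result $\chi_\zeta$, Kempf--Ness likewise identifies $\overline\Mm{}_{k,\zeta}^r(\CP^2)=\mu^{-1}(0,\zeta)/U(W_0)\times U(W_1)$ with $N/\!/_{\chi_\zeta}G$. Since $\mu_0=0$ forces the spanning condition $a_1(W_1)+a_2(W_1)+b(\Cc^r)=W_0$ (as in the proof of Lemma~\ref{lemmamaxrank}) and, by Lemma~\ref{lemmaC1C2}, \emph{C1'} holds throughout $\mu^{-1}(0,\zeta)$, every $\chi_\zeta$-semistable point is $\chi_0$-semistable; the induced comparison morphism of GIT quotients is exactly $\pi$, and assertion~(1) follows at once, a comparison morphism of GIT quotients of a common affine variety being projective (both quotients are projective over the affine quotient of $N$), hence proper.

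For assertion~(2), observe that conditions \emph{C1'} and \emph{C2'} involve only subspaces and not the hermitian metric, so the regular locus $\Rreg_k^r(\CP^2)$ where both hold is a $G$-invariant open subset of $N$; it is the exact preimage of $\Mm_k^r(\CP^2)$ under the first quotient map and, at the same time, precisely $\pi^{-1}(\Mm_k^r)$. On it both $\chi_0$- and $\chi_\zeta$-stability reduce to the requirement that \emph{C1'} and \emph{C2'} both hold, and $G$ acts freely by Lemma~\ref{lemmafree}; hence over $\Rreg_k^r(\CP^2)$ both GIT quotients are the geometric quotient $\Rreg_k^r(\CP^2)/G$, so $\pi$ restricts there to a biholomorphism. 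Equivalently, by Kempf--Ness a $\chi_\zeta$-stable $G$-orbit meets $\mu^{-1}(0,\zeta)$ in a single $U(W_0)\times U(W_1)$-orbit, and the resulting identification is biholomorphic. This yields the isomorphism $\pi^{-1}(\Mm_k^r)\stackrel{\simeq}{\to}\Mm_k^r$ of~(2).

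For assertion~(3), recall that for $k\le4$ and $0<|\zeta|<1$ the preceding Proposition makes $\overline\Mm{}_{k,\zeta}^r(\CP^2)$ a smooth manifold of real dimension $4kr$, on which $U(W_0)\times U(W_1)$ acts freely. For $\zeta>0$, Lemma~\ref{lemmaC1C2} gives that \emph{C1'} holds identically on $\mu^{-1}(0,\zeta)$, so the complement of $\pi^{-1}(\Mm_k^r)$ is exactly the closed set $Z$ on which \emph{C2'} fails; write $Z=\bigcup_{\ell=1}^{k}Z_\ell$, where $Z_\ell$ is the image in the quotient of the configurations admitting subspaces $V_0\subset W_0$, $V_1\subset W_1$ with $\dim V_0=\dim V_1=\ell$, $V_1\subset\ker c$, $d(V_0)\subset V_1$ and $a_i(V_1)\subset V_0$. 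Each $Z_\ell$ is closed, being the image under a proper map of the family of such configurations fibered over the product of the Grassmannians of $\ell$-planes in $W_0$ and in $W_1$. A parameter count bounds its dimension: the choice of the pair $(V_0,V_1)$ contributes $4\ell(k-\ell)$ real dimensions, while the constraint $V_1\subset\ker c$ alone kills $2\ell r$ real dimensions of otherwise free data (and the triangular form then forced on $a_i$ and $d$ kills more), so that, after intersecting with $\mu^{-1}(0,\zeta)$ and dividing by the free $U(W_0)\times U(W_1)$-action, $\dim_{\Rr}Z_\ell\le 4kr-2\ell r<4kr$. Thus $Z$ is a closed set of positive codimension in the manifold $\overline\Mm{}_{k,\zeta}^r(\CP^2)$, hence nowhere dense, and $\pi^{-1}(\Mm_k^r)$ is dense, which is assertion~(3).

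The step I expect to demand the most care is this last dimension count, and specifically organizing the configurations in $Z_\ell$ by the induced triangular sub- and quotient-monad structure so that the successive constraints genuinely cut down dimension as claimed; a close second is pinning down the linearizations $\chi_0$ and $\chi_\zeta$ at the outset so that they are truly comparable. Once those are in hand everything else is formal: (1) is the projectivity of a GIT comparison morphism and (2) is the standard behaviour of GIT over the stable locus, fed by Lemmas~\ref{lemmaC1C2} and~\ref{lemmafree}; the hypothesis $0<\zeta<1$ enters only through Lemma~\ref{lemmaC1C2}, and $k\le4$ only through the smoothness of $\overline\Mm{}_{k,\zeta}^r$ supplied by the preceding Proposition.
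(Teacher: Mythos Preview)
Your proof is correct and, for parts~(2) and~(3), essentially matches the paper's: the paper simply cites \cite{NakALE}, \cite{Kro89} for~(2), and for~(3) uses the same dimension estimate you reach, stating only that $\pi^{-1}(\overline\Mm_k^r-\Mm_k^r)$ has real dimension at most $4kr-2r$, whereas you unpack the stratification by $Z_\ell$.

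The genuine difference is in part~(1). The paper gives a direct analytic argument: assuming a sequence $(x_n)$ in $\overline\Mm{}_{k,\zeta}^r$ with $\pi(x_n)$ convergent but no convergent subsequence, it picks representatives $m_n\in\mu^{-1}(0,\zeta)$, uses the trace of the moment map equations to show $\|m_n\|\to\infty$, pushes through the ``forgetful'' map $p(a_1,a_2,d,b,c)=(da_1,da_2,db,c)$ to $\R_k^r(S^4)$, rescales, and derives a contradiction from closedness of $GL$-orbits in $\mu^{-1}_{S^4}(0)$. You instead invoke the GIT picture: identify $\pi$ as the comparison morphism between $N/\!/_{\chi_\zeta}G$ and $N/\!/_{\chi_0}G$, check (via $\mu_0=0$ and Lemma~\ref{lemmaC1C2}) that $\chi_\zeta$-semistable points are $\chi_0$-semistable, and conclude properness because both quotients are projective over the common affine quotient $N/\!/G$. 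Your route is cleaner and more conceptual once the linearizations are pinned down, and avoids the somewhat ad~hoc passage through $\R_k^r(S^4)$; the paper's argument, by contrast, is self-contained and avoids having to set up the GIT dictionary carefully (in particular the identification of the $\chi_0$-semistable locus with the spanning condition, and of $\chi_\zeta$-stability on $\Rreg_k^r$). You rightly flag the latter as the point demanding the most care.
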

\begin{proof}\hfill
\begin{enumerate}
\item We assume, by contradiction, that there is a sequence $(x_k)$ in $\overline\Mm{}_{k,\zeta}^r(\CP^2)$ without any
convergent subsequence and such that $\pi(x_k)$ converges.
Let $m_k=(a_{1k},a_{2k},d_k,b_k,c_k)\in\mu^{-1}(0,\zeta)$ 
be a representative of $x_k$;
then $\|m_k\|\to+\infty$. 
Let $p:\R_k^r(\CP^2)\to\R_k^r(S^4)$ be given by
$p(a_1,a_2,d,b,c)=(da_1,da_2,db,c)$. 
Taking the trace of the moment map equations  \eqref{eq-mmP2} and \eqref{eq-m0+m1} we see that
$\|a_{ik}\|$, $\|b_k\|$ and $\|da_{ik}\|^2+\|c_k\|^2-\|d_k\|^2$ are bounded so 
$\|p(m_k)\|\to+\infty$. Passing to a subsequence if necessary, we may assume that $p(m_k)/\|p(m_k)\|$ converges;
let $m_\infty$ be the limit; since $\mu(m_k)=(0,\zeta)$,
dividing by $\|m_k\|^2$ and taking the limit we see that
$m_\infty$ obeys the moment map equation \eqref{eq-mmS4}. From general properties of the moment map, the $GL$-orbit of $m_\infty$
is closed. Now let $n_k\in\mu^{-1}(0,0)$ be a representative of $\pi(x_k)$;
since $m_k$ and $n_k$ represent the same element in $\R_k^r(\CP^2)/\!/GL\times GL$,
$p(m_k)/\|m_k\|$ and $p(n_k)/\|m_k\|$
represent the same element in $\R_k^r(S^4)/\!/GL$. Since
$\pi(x_k)$ converges, $p(n_k)/\|p(m_k)\|\to 0$; but then, $0$ is in the closure of the $GL$-orbit
of $m_\infty$ which implies that $m_\infty=0$, which is a contradiction since $\|m_\infty\|=1$.
\item The proof is the same as the one in \cite{NakALE}, \cite{Kro89}.
\item We just need to check that no component of
$\overline\Mm{}_{k,\zeta}^r$ is mapped into the singular set. 
This follows from dimensional
reasons: every component has dimension $4kr$ and
$\pi^{-1}(\overline\Mm_k^r-\Mm_k^r)$ has dimension at most $4kr-2r$.\qedhere
\end{enumerate}
\end{proof}

\subsection{The rank-stable limit}

We now fix $\zeta\in\Rr$ with $|\zeta|<1$ and let $\E_k^r=\mu^{-1}(0,\zeta)$ and
$E_k^r=\E_k^r\cap\Rreg_k^r$.
Similarly to the case $X=S^4$,
we have maps of pairs 
$\bigl(\overline\Mm{}_k^r,\Mm_k^r\bigr)\to\bigl(\overline\Mm{}_k^{r'},\Mm_k^{r'}\bigr)$
and we define $\Mm_k^\infty$, $\overline\Mm{}_k^\infty$, $E_k^\infty$ and $\E_k^\infty$ as the direct limits 
when $r\to\infty$.

\begin{theorem}\label{thm31}
The maps $\E_k^\infty\to \overline\Mm{}_k^\infty$ and 
$E_k^\infty\to\Mm_k^\infty$ are universal $U(W_0)\times U(W_1)$ bundles and
the inclusion map $j\colon \Mm_k^\infty(\CP^2)\to\overline\Mm{}_k^\infty(\CP^2)$ is a homotopy equivalence.
\end{theorem}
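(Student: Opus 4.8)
The plan is to reproduce, step for step, the structure of the proof of Theorem~\ref{thm21}. First I would show that $\E_k^\infty$ and $E_k^\infty$ are weakly contractible; as in Theorem~\ref{thm21}, since every map of a sphere into a direct limit of this kind factors through a finite stage, it suffices to produce null-homotopies of the inclusions $i_{r,r+3k}\colon\E_k^r\to\E_k^{r+3k}$ that restrict to $E_k^r\to E_k^{r+3k}$. Then, since by Lemma~\ref{lemmafree} the group $U(W_0)\times U(W_1)$ acts freely on $\mu^{-1}(0,\zeta)$ (we are assuming $0<|\zeta|<1$, so $\zeta\notin\Zz$), the maps $\E_k^\infty\to\overline\Mm{}_k^\infty$ and $E_k^\infty\to\Mm_k^\infty$ are universal $U(W_0)\times U(W_1)$-bundles, and the five lemma applied to the two long exact sequences of homotopy groups shows that $j$ induces isomorphisms on all $\pi_n$, hence is a homotopy equivalence. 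So the whole content is the construction of the homotopy.

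To build $h\colon\E_k^r\times[0,1]\to\E_k^{r+3k}$ I would identify $W_0\cong W_1\cong\Cc^k$ hermitianly, split the $3k$ new coordinates into three size-$k$ blocks, and run $h$ in two phases. In the first phase, for $s\in[0,1]$, I set
\[
h(a_1,a_2,d,b,c)=\bigl(\sqrt{1-s}\,a_1,\ \sqrt{1-s}\,a_2,\ d,\ b_s,\ c_s\bigr),
\]
with $b_s=\bigl(\sqrt{1-s}\,b\ \ \sqrt s\,\Id_{W_0}\ \ 0\ \ 0\bigr)$ and $c_s=\bigl(\sqrt{1-s}\,c\ \ 0\ \ \sqrt{(1-\zeta)s}\,\Id_{W_1}\ \ \sqrt s\,d^*\bigr)^T$. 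The identity block of $b_s$ is exactly what $\mu_0=0$ demands once $a_i$ is scaled by $\sqrt{1-s}$; the delicate point is that, through $db_s$, this block injects an extra $s\,dd^*$ into $\mu_1$, which is cancelled by the $\sqrt s\,d^*$ block of $c_s$ (contributing $-s\,dd^*$ via $-c_s^*c_s$), while the remaining scalar discrepancy is repaired by the identity block of $c_s$ with its coefficient $\sqrt{(1-\zeta)s}$ — this is where $1-\zeta>0$, i.e.\ $|\zeta|<1$, is used. A direct substitution shows that along this path $\mu_0=0$ and $\mu_1=\zeta\Id_{W_1}$ hold identically, that the surjectivity condition $a_1(W_1)+a_2(W_1)+b(\Cc^{r+3k})=W_0$ persists, and that~\eqref{eqintegrability} persists because the four column-blocks of $b_s$ and $c_s$ are supported on disjoint coordinates, so $b_sc_s=(1-s)\,bc$ and no new term appears. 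At $s=1$ only $d$, the identity block of $b$, the identity block of $c$ and a $d^*$-block survive; in the second phase I scale $d$ and that $d^*$-block down to zero together, which preserves both equations of~\eqref{eq-mmP2} by the same bookkeeping and ends at the \emph{constant} configuration $\bigl(0,0,0,(0\ \Id_{W_0}\ 0\ 0),(0\ 0\ \sqrt{1-\zeta}\,\Id_{W_1}\ 0)^T\bigr)$. Since at $s=0$ the map is $i_{r,r+3k}$, this is the required null-homotopy.

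It then remains to check that $h$ restricts to $E_k^r\times[0,1]\to E_k^{r+3k}$. As in Theorem~\ref{thm21}, for every $t>0$ the identity block of $b$ makes $b$ surjective and the identity block of $c$ (with coefficient $\sqrt{(1-\zeta)s}$ in the first phase, $\sqrt{1-\zeta}$ in the second, both positive) makes $c$ injective, so conditions \emph{C1'} and \emph{C2'} hold automatically there; at $t=0$ the map is the inclusion $i_{r,r+3k}$, which preserves $E_k^r$. Hence $\E_k^\infty$ and $E_k^\infty$ are contractible and the argument of the first paragraph goes through.

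I expect the main obstacle to be precisely the construction in the second paragraph. Unlike the $S^4$ moment map~\eqref{eq-mmS4}, which is homogeneous enough that the plain $\sqrt{1-t}$-rescaling of Theorem~\ref{thm21} works, the second equation in~\eqref{eq-mmP2} mixes terms of degree $4$ ($[da_i,(da_i)^*]$ and $db(db)^*$), degree $2$ ($a_i^*a_i$, $c^*c$) and degree $0$ ($\Id_{W_1}$), so no rescaling alone can preserve it. Guessing the right enlargement of $c$ — the $d^*$-block, which is what inflates the rank jump from $2k$ to $3k$ — so that the $dd^*$ term forced into $\mu_1$ by the $\mu_0$-mandated identity block of $b$ is exactly cancelled, and tuning every coefficient so that $\mu_1$ lands on $\zeta\Id_{W_1}$ on the nose rather than up to an operator-valued error, is where the real work lies; the two-phase split (scale $a_1,a_2$ and the old parts of $b,c$ first, then scale $d$) seems to be forced because the $a_i^*a_i$ terms obstruct rescaling $d$ while $a_1,a_2$ are still present.
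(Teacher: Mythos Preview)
Your proposal is correct and follows essentially the same route as the paper: a two--phase null-homotopy of $i_{r,r+3k}$ that first rescales $a_1,a_2$ and the old $b,c$ by $\sqrt{1-s}$ while inserting an $\Id$-block into $b$, a $d^*$-block and an $\Id$-block into $c$, and then scales $d$ together with the $d^*$-block to zero; the restriction to $E_k^r$ is handled exactly as you say, via surjectivity of $b$ and injectivity of $c$ forcing C1$'$ and C2$'$. Up to a harmless permutation of the three new $\Cc^k$-summands your formulas coincide with the paper's, and in fact your coefficient $\sqrt{(1-\zeta)s}$ on the $\Id$-block of $c_s$ is the right one---the paper's printed $\sqrt{t\zeta}$ (and $\sqrt\zeta$ in the second phase) appears to be a typo for $\sqrt{t(1-\zeta)}$, as one checks by substituting into $\mu_1$.
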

\begin{proof}
The proof follows the same lines as the proof of Theorem \ref{thm21}.
We first show that the inclusions $i_{r,r+3k}\colon \E_k^r\to\E_k^{r+3k}$
are null-homotopic. 
First we identify $W_0$ and $W_1$ with $\Cc^k$
via a complex hermitian isomorphism; 
then 
we define a homotopy $h\colon \E_k^r\times[0,1]\to\E_k^{r+3k}$ by
\[
h_t(a_1,a_2,d,b,c)=\left(\,\sqrt{1-t}\,a_1\,,\,\sqrt{1-t}\,a_2\,,\,d\,,\,b_t\,,\,c_t\,\right)
\]
where
\[
b_t=\Bigl(\begin{matrix}\sqrt{1-t}\,b&0&\sqrt{t}\,\Id&0\end{matrix}\Bigr)\quad
\text{and}\quad
c_t=\smash[t]{\Bigl(\begin{matrix}\sqrt{1-t}\,c & \sqrt{t}\,d^* & 0 & \sqrt{t\zeta}\,\Id\end{matrix}\Bigr)^T}\,.
\]
It is a direct verification that $h$ is a well defined homotopy
between $i_{r,r+3k}$ and the map $f\colon \E_k^r\to E_k^{r+3k}$
given by $f(a_1,a_2,d,b,c)=(0,0,d,b_1,c_1)$.
Furthermore, $h$ restricts to a homotopy
$E_k^r\to E_k^{r+3k}$.
We now follow $h$ with another homotopy 
$\tilde h\colon \E_k^{r}\times[0,1]\to\E_k^{r+3k}$ defined by
\[
\tilde h_t(a_1,a_2,d,b,c)=\left(0,0,(1-t)\,d,b_1,\tilde c_t\right),
\]
where
\[
\tilde c_t=\smash[t]{\Bigl(\begin{matrix}0&(1-t)\,d^*&0&\sqrt\zeta \,\Id\end{matrix}\Bigr)^T}.
\]
The map $\tilde h$ is then a homotopy between the map $f$ and a constant map. 
We conclude that the maps $i_{r,r+3k}$ are null-homotopic, from which it follows that the
space $\E_k^\infty$ is contractible. By restricting $h$ and $\tilde h$ to 
$E_k^r\times[0,1]$, we also conclude that $E_k^\infty$ is contractible.
As in the proof of Theorem~\ref{thm21}, we conclude that 
the map $j$ is a homotopy equivalence.
\end{proof}

\bibliography{bi.bib}

\providecommand{\bysame}{\leavevmode\hbox to3em{\hrulefill}\thinspace}
\providecommand{\MR}{\relax\ifhmode\unskip\space\fi MR }
\providecommand{\MRhref}[2]{%
  \href{http://www.ams.org/mathscinet-getitem?mr=#1}{#2}
}
\providecommand{\href}[2]{#2}
\begin{thebibliography}{1}

\bibitem{ADHM78}
M.~Atiyah, N.~Hitchin, V.~Drinfel{$'$}d, and Yu. Manin, \emph{Construction of
  instantons}, Phys. Lett. A \textbf{65} (1978), no.~3, 185--187.

\bibitem{BrSa97}
J.~Bryan and M.~Sanders, \emph{The rank stable topology of instantons of
  {$\overline{\bf C{\rm P}}{}^2$}}, Proc. Amer. Math. Soc. \textbf{125} (1997),
  no.~12, 3763--3768.

\bibitem{Kin89}
A.~King, \emph{Instantons and holomorphic bundles on the blown-up plane}, Ph.D.
  thesis, Worcester College, Oxford, 1989.

\bibitem{Kir94}
F.~Kirwan, \emph{Geometric invariant theory and the {A}tiyah-{J}ones
  conjecture}, The Sophus Lie Memorial Conference (Oslo, 1992), Scand. Univ.
  Press, Oslo, 1994, pp.~161--186.

\bibitem{Kro89}
P.~Kronheimer, \emph{The construction of {ALE} spaces as hyper-{K}\"ahler
  quotients}, J. Differential Geom. \textbf{29} (1989), no.~3, 665--683.

\bibitem{NakALE}
H.~Nakajima, \emph{Instantons on {ALE} spaces, quiver varieties, and
  {K}ac-{M}oody algebras}, Duke Math. J. \textbf{76} (1994), no.~2, 365--416.

\bibitem{Nak94}
\bysame, \emph{Resolutions of moduli spaces of ideal instantons on {$\Rr^4$}},
  Topology, geometry and field theory, World Sci. Publishing, River Edge, NJ,
  1994, pp.~129--136.

\bibitem{San95}
M.~Sanders, \emph{Classifying spaces and {D}irac operators coupled to
  instantons}, Trans. Amer. Math. Soc. \textbf{347} (1995), no.~10, 4037--4072.

\end{thebibliography}

\end{document}